\newcommand{\N}{\mathbb{N}}
\newcommand{\R}{\mathbb{R}}
\newcommand{\C}{\mathbb{C}}
\newtheorem{theorem}{Theorem}[section]
\newtheorem{lemma}[theorem]{Lemma}
\newtheorem{corollary}[theorem]{Corollary}
\theoremstyle{remark}
\newtheorem{remark}[theorem]{Remark}
\newtheorem{example}[theorem]{Example}
\renewcommand{\Re}{\mathrm{Re}\,}	% Human readable Re
\renewcommand{\Im}{\mathrm{Im}\,}	% and Im
\newcommand{\rmi}{\mathrm{i}}
\newcommand{\rme}{\mathrm{e}}
\newcommand{\xhat}{\hat{x}}
\newcommand{\Ol}{{\mathcal O}}
\newcommand{\loc}{{\mathrm{loc}}}
\newcommand{\ds}{\,\mathrm{d}S}
\newcommand{\dy}{\,\mathrm{d}y}
\newcommand{\dx}{\,\mathrm{d}x}
\newcommand{\Acal}{\mathcal{A}}
\newcommand{\Mcal}{\mathcal{M}}
\DeclareMathOperator{\supp}{supp}
\begin{document}

\title[Analyticity of far field patterns]{A note on analyticity properties of far field patterns}

\author{Roland Griesmaier}
\address{Mathematisches Insitut, Universit\"at Leipzig, 04009 Leipzig, Germany}
\email{griesmaier@math.uni-leipzig.de}
\urladdr{http://www.math.uni-leipzig.de/$\sim$griesmaier/}

\author{Nuutti Hyv\"onen}
\address{Aalto University, Department of Mathematics and Systems Analysis, FI-00076 Aalto, Finland}
\email{nuutti.hyvonen@aalto.fi}
\urladdr{http://users.tkk.fi/nhyvonen/}

\author{Otto Seiskari}
\address{Aalto University, Department of Mathematics and Systems Analysis, FI-00076 Aalto, Finland}
\email{otto.seiskari@aalto.fi}

\thanks{This work was supported in part by the Academy of Finland (project 135979)}

\subjclass[2000]{35R30, 35Q60}
\keywords{Far field pattern, analytic functions, inverse scattering, partial data}

\hypersetup{%
  pdftitle={\shorttitle},%
  pdfauthor={},%
  pdfsubject={},%
  pdfkeywords={},%
  pdfborder={0 0 0},%
  colorlinks={false}%
}

\begin{abstract}
  In scattering theory the far field pattern describes the directional dependence of a time-harmonic wave scattered by an obstacle or inhomogeneous medium, when observed sufficiently far away from these objects.
  Considering plane wave excitations, the far field pattern can be written as a function of two variables, namely the direction of propagation of the incident plane wave and the observation direction, and it is well-known to be separately real analytic with respect to each of them.
  We show that the far field pattern is in fact a jointly real analytic function of these two variables.
\end{abstract}

\maketitle

  \noindent
{\footnotesize \textbf{Last modified:} \today}

\section{Introduction}
\label{sec:intro}
One of the main themes of inverse scattering theory for time-harmonic acoustic and electromagnetic waves is to recover information on obstacles or inhomogeneous media from knowledge of far field patterns of scattered fields excited by plane wave incident fields.
Sufficiently far away from the scatterers every scattered field has the asymptotic behavior of an outgoing spherical wave, and the far field pattern describes its directional dependence.
Taking into account the incident field, the far field pattern $u^\infty(\xhat;\theta)$, $\xhat, \theta \in S^{d-1}$, where $S^{d-1}$ denotes the unit sphere in $\R^d$, $d=2,3$, is a function of two variables --- the observation direction $\xhat$ and the direction of propagation $\theta$ of the incident plane wave.
This function is well known to be separately real analytic, i.e., for each $\theta \in S^{d-1}$ the function $u_\infty(\,\cdot\,;\theta)$ is real analytic on $S^{d-1}$, while for each $\xhat \in S^{d-1}$ the function $u_\infty(\xhat;\,\cdot\,)$ is real analytic on $S^{d-1}$ (see, e.g., Colton and Kress \cite{ColKre83}).
In this work we establish that the far field pattern is in fact jointly real analytic on $S^{d-1}\times S^{d-1}$.

To this end, we consider the special case of scattering from an inhomogeneous medium (see, e.g., Colton and Kress~\cite{Colton92}), but our approach immediately extends to obstacle scattering problems as well.
The main idea is to utilize a factorization of the so-called far field operator, which maps superpositions of plane wave incident fields to the corresponding far field patterns, to show that the far field pattern can locally be extended to a separately holomorphic function.
Then the assertion follows from Hartogs' theorem \cite{Har06}, which is a fundamental result in the theory of several complex variables. A similar approach has previously been employed in a related two-dimensional setting of electrical impedance tomography \cite{Hyvonen12}.

The joint real analyticity of the far field pattern has an interesting consequence for the classical uniqueness results for inverse medium scattering problems due to Bukhgeim~\cite{Buk08}, Nachman~\cite{Nac88}, Novikov~\cite{Nov88}, and Ramm~\cite{Ram88}, which state that (under appropriate regularity assumptions) the index of refraction of a compactly supported inhomogeneous object is uniquely determined by the knowledge of the far field pattern $u_\infty(\xhat;\theta)$ for all $(\xhat,\theta) \in {S^{d-1}\times S^{d-1}}$.
The separate real analyticity implies that it is actually sufficient to know the far field pattern on some open subset of $S^{d-1}\times S^{d-1}$ (see, e.g., \cite{Colton92}).
From the joint real analyticity shown in this work it follows that the far field pattern on $S^{d-1}\times S^{d-1}$, and thus the index of refraction, is completely determined by only knowing $u_\infty$ and all of its derivatives at a single point $(x,\theta) \in S^{d-1}\times S^{d-1}$.

The outline of this article is as follows.
In the next section we introduce our mathematical setting and state the main result, which is then proved in Section~\ref{sec:proof}.

\section{Mathematical setting and main result}
\label{sec:main}
We consider scattering of time harmonic acoustic waves by an inhomogeneous medium in $\R^d$, $d=2,3$, caused by a plane wave \emph{incident field} $u^i(x;\theta) := \rme^{\rmi k x\cdot\theta}$, $x\in\R^d$, with the \emph{incident direction} $\theta\in S^{d-1}$, satisfying the homogeneous Helmholtz equation
\begin{equation*}
  \Delta u^i(\,\cdot\,;\theta) + k^2 u^i(\,\cdot\,;\theta) \,=\, 0 \qquad  \mbox{in $\R^d$} \,,
\end{equation*}
where $k>0$ is a (fixed) \emph{wave number}.
The scattering properties of the medium are characterized by its \emph{refractive index} $n \in L^\infty(\R^d)$, and we assume that ${\Re(n) \geq 0}$ and $\Im(n) \geq 0$ almost everywhere in $\R^d$ as well as that $\supp (n-1) \subset \R^d$ is compact.
Then the direct scattering problem is to determine the \emph{total field} $u(\,\cdot\,;\theta) = u^i(\,\cdot\,;\theta) + u^s(\,\cdot\,;\theta) \in H^1_{\rm loc}(\R^d)$ satisfying
\begin{equation}
  \label{eq:forward}
  \Delta u(\,\cdot\,;\theta) + k^2 n u(\,\cdot\,;\theta) \,=\, 0 \qquad  \mbox{in $\R^d$} \,, 
\end{equation}
such that the \emph{scattered field} $u^s(\,\cdot\,;\theta)$ satisfies the \emph{Sommerfeld radiation condition}
\begin{equation}
  \label{eq:Sommerfeld}
  \lim_{r \to \infty} r^{(d-1)/2} \Big( \frac{\partial u^s(\,\cdot\,;\theta)}{\partial r} - {\rm i}k u^s(\,\cdot\,;\theta) \Big) \,=\, 0 \,,
\end{equation}
where $r := |x|$ and \eqref{eq:Sommerfeld} holds uniformly with respect to $\xhat := x/r \in S^{d-1}$. 
It is well known that \eqref{eq:forward}--\eqref{eq:Sommerfeld} is uniquely solvable (cf., e.g., \cite[Sec.~6.2]{Kir11}).
In fact, the unique weak solution belongs to $H^2_{\rm loc}(\R^d)$ and is smooth away from $\supp(n-1)$ due to standard interior regularity results for elliptic partial differential equations (see, e.g., \cite[p.~125, Thm.~3.2]{Lions72}).

The scattered field has the asymptotic behavior
\begin{equation*}
  u^s(x;\theta) = \frac{\rme^{\rmi k |x|}}{|x|^{(d-1)/2}} \biggl( u_\infty(\xhat; \theta) + \Ol\biggl(\frac1{|x|}\biggr) \biggr)
\end{equation*}
uniformly with respect to $\xhat \in S^{d-1}$, where the \emph{far field pattern} $u_\infty(\,\cdot\,;\theta)$ is given by
\begin{equation}
  \label{eq:farfieldpattern}
  u_\infty(\xhat;\theta) = c_d k^2 \int_{\R^d} (n(y)-1) \rme^{-\rmi k \xhat \cdot y} u(y; \theta) \dy \,, \qquad \xhat \in S^{d-1} \,,
\end{equation}
with $c_d = \rme^{\rmi\pi/4} / \sqrt{8\pi k}$ if $d=2$ and $c_d = 1 / (4\pi)$ if $d=3$ (cf., e.g., \cite[Sec.~6.2]{Kir11}).
Since $\supp(n-1)$ is compact by assumption, the domain of integration in \eqref{eq:farfieldpattern} is actually bounded.

We are interested in analyticity properties of $u_\infty$ as a function on $S^{d-1}\times S^{d-1}$.
To this end, we first recall the definition of a real analytic function on a real analytic manifold (see, e.g., Krantz~\cite{Kra92}):
A subset $\Mcal \subset \R^d$ is called a \emph{manifold} of dimension $m\leq d$ if for each $x\in\Mcal$ there exists a neighborhood $U = U_x$ of $x$ in $\Mcal$, and an open subset $W\subset \R^m$, and a homeomorphism $\phi: U \to \phi(U) = W$.
The pairs $(U, \phi)$ are called \emph{charts}, and an \emph{atlas} is a family $\{ (U_\alpha,\phi_\alpha) \}_{\alpha\in\Acal}$ for some index set $\Acal$ such that $\{ U_\alpha \}_{\alpha\in\Acal}$ is an open covering of $\Mcal$.
The manifold is said to be real analytic, if all transition maps
\begin{equation*}
  \phi_\beta \circ \phi_\alpha^{-1}: \phi_\alpha(U_\alpha\cap U_\beta) \to \phi_\beta(U_\alpha \cap U_\beta) \,, \qquad \alpha, \beta \in \Acal \,,
\end{equation*}
are real analytic.
Accordingly, a mapping $f : \mathcal{M} \to \C$ is called \emph{real analytic} if $f \circ \phi_\alpha^{-1}$ is real analytic for all $\alpha \in \Acal$.

Of course, a real analytic function $f: \Mcal \to \C$ is completely determined by its Taylor coefficients with respect to the local coordinates and therefore by the derivatives 
\begin{equation*}
  \left\{ \big(D^\eta (f \circ \phi^{-1})\big)(y) \; | \; \eta \in \N^m_0 \right\}
\end{equation*}
for an arbitrary $(U,\phi) \in \{(U_\alpha,\phi_\alpha) \}_{\alpha \in \mathcal{A}}$ and $y \in \phi(U)$.

\begin{example}% 
  \label{example-charts-3d}
  The unit sphere $S^{d-1}$, ${d=2,3}$, and therefore also ${S^{d-1}\times S^{d-1}}$, is a real analytic manifold:
  An atlas consisting of two charts is given by 
\begin{equation*}
U_+ = \{ x \in S^{d-1} \;|\; x_d < 3/5 \} \,, \qquad U_- = \{ x \in S^{d-1} \;|\; x_d > -3/5 \} \,,
\end{equation*}
and accordingly
  \begin{equation*}
    \begin{split}
      &\phi_+: U_+ \to \R^{d-1}, \quad (x_1,\ldots, x_d)
      \mapsto \Bigl( \frac{x_1}{1-x_d}, \cdots, \frac{x_{d-1}}{1-x_d} \Bigr) \,,\\
      &\phi_-: U_- \to \R^{d-1}, \quad (x_1,\ldots, x_d)
      \mapsto \Bigl( \frac{x_1}{1+x_d}, \cdots, \frac{x_{d-1}}{1+x_d} \Bigr) \,.
    \end{split}
  \end{equation*}
  It follows immediately that $\phi_+(U_+) = \phi_-(U_-) = B_2(0)$, where $B_2(0) \subset \R^{d-1}$ denotes the open ball of radius $2$ around the origin, and that
\begin{equation*}
    \begin{split}
      \phi_+^{-1}(y) 
      &= \Bigl( \frac{2y_1}{1+\|y\|^2}, \ldots, \frac{2y_{d-1}}{1+\|y\|^2}, \frac{\|y\|^2-1}{\|y\|^2+1} \Bigr) \,, \qquad y = (y_1,\ldots,y_{d-1}) \in \R^{d-1} \,,\\ 
      \phi_-^{-1}(y) 
      &= \Bigl( \frac{2y_1}{1+\|y\|^2}, \ldots, \frac{2y_{d-1}}{1+\|y\|^2}, \frac{1-\|y\|^2}{\|y\|^2+1} \Bigr) \,, \qquad y = (y_1,\ldots,y_{d-1}) \in \R^{d-1} \,.
    \end{split}
  \end{equation*}
Since the transition map
  \begin{equation*}
    (\phi_- \circ \phi_+^{-1})(y) = \frac{y}{\|y\|^2} = (\phi_+ \circ \phi_-^{-1})(y) \,, \qquad y \in B_2(0) \setminus \{0\} \subset \R^{d-1} \,,
  \end{equation*}
 is real analytic and $S^{d-1} = U_+\cup U_-$, we have equipped $S^{d-1}$ with a real analytic structure.

  For later reference we note that $\phi_\pm^{-1}$ canonically extends to a bounded function $\widetilde{\phi_\pm^{-1}}: V \to \C^d$ that is separately holomorphic with respect to each variable $z_j$, $j=1,\ldots,d-1$, with $V \subset \C^{d-1}$ being a sufficiently small open neighborhood of $B_2(0) \subset \R^{d-1}$ in $\C^{d-1}$.
(Here the map $\R^{d-1} \ni y \mapsto \| y \|^2 \in \R$ is extended as $\C^{d-1} \ni z \mapsto \sum z_j^2 \in \C$, not as the squared norm of $\C^{d-1}$.)
  \hfill $\diamond$
\end{example}

From the representation formula \eqref{eq:farfieldpattern} it follows immediately that for each $\theta \in S^{d-1}$ the function $u_\infty(\,\cdot\,;\theta)$ is real analytic on $S^{d-1}$.
By reciprocity,
\begin{equation*}
  u_\infty(\xhat;\theta) = u_\infty(-\theta;-\xhat) \qquad \text{for $\xhat, \theta \in S^{d-1}$} 
\end{equation*}
(see, e.g., \cite[Sec.~6.3]{Kir11}), and thus for each $\xhat \in S^{d-1}$ the function $u_\infty(\xhat;\,\cdot\,)$ is real analytic on $S^{d-1}$ as well.
However, the joint real analyticity of $u_\infty$ with respect to both variables, i.e., that $u_\infty$ locally coincides with its Taylor series in the local coordinates corresponding to $(\hat{x}, \theta)$, is a stronger result: a standard counter example is the function (cf., e.g., \cite[p.~27]{Hormander73})
\begin{equation*}
  f(x,y) = \frac{xy}{x^2 + y^2} \,, \qquad x,y \in \R \,,
\end{equation*}
which is separately real analytic on $\R^2$, but not even continuous at the origin. 
To the best of our knowledge, the joint real analyticity of $u_\infty$ with respect both variables has not been reported in the literature so far.
As our main result, we establish this property of the far field pattern in the following theorem, which will be proved in Section~\ref{sec:proof}. 

\begin{theorem}
  \label{thm:main}
  The far field pattern $u_\infty$ introduced in \eqref{eq:farfieldpattern} is a (jointly) real analytic function on $S^{d-1}\times S^{d-1}$.
\end{theorem}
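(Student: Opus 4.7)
The plan is to extend $u_\infty$ locally to a jointly holomorphic function of complex variables via a factorized representation of the far field, and to invoke Hartogs' theorem to pass from separate to joint holomorphy. It suffices to show, for any two charts $(U_\alpha,\phi_\alpha),(U_\beta,\phi_\beta)$ of the atlas in Example~\ref{example-charts-3d}, that
\[
  F(y,z) \;:=\; u_\infty\bigl(\phi_\alpha^{-1}(y);\phi_\beta^{-1}(z)\bigr),
  \qquad (y,z)\in B_2(0)\times B_2(0)\subset \R^{2(d-1)},
\]
is jointly real analytic. I would do this by constructing a jointly holomorphic extension of $F$ to a complex neighborhood of $B_2(0)\times B_2(0)$ in $\C^{2(d-1)}$, starting from the separately holomorphic chart extensions $\widetilde{\phi_\pm^{-1}}:V\to\C^d$ from Example~\ref{example-charts-3d}, which are in fact jointly holomorphic as $\C^d$-valued maps after applying Hartogs' theorem to each scalar component.

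For the factorization I would fix a bounded Lipschitz domain $\Omega\csubset\R^d$ with $\supp(n-1)\subset\Omega$. The Lippmann--Schwinger theory for \eqref{eq:forward}--\eqref{eq:Sommerfeld} supplies a bounded linear operator $\Acal:=(I-\mathcal{K})^{-1}:L^2(\Omega)\to L^2(\Omega)$, where $\mathcal{K}v(x):=k^2\int_\Omega \Phi_k(x-y)(n(y)-1)v(y)\dy$ and $\Phi_k$ is the outgoing fundamental solution of the Helmholtz equation; for each $\theta\in S^{d-1}$ one has $u(\,\cdot\,;\theta)|_\Omega = \Acal\,u^i(\,\cdot\,;\theta)|_\Omega$. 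Substituting this identity into \eqref{eq:farfieldpattern} recasts the far field pattern as a continuous bilinear $L^2(\Omega)$-pairing,
\[
  u_\infty(\xhat;\theta) \;=\; c_d k^2 \int_\Omega (n(y)-1)\,\rme^{-\rmi k\xhat\cdot y}\bigl(\Acal\,\rme^{\rmi k(\cdot)\cdot\theta}\bigr)(y)\dy,
\]
in which the two factors depend only on $\xhat$ and only on $\theta$, respectively.

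To complexify, I observe that for any $\xi\in\C^d$ the exponential $y\mapsto\rme^{\rmi k\xi\cdot y}$ is bounded on $\Omega$ and, viewed as an $L^2(\Omega)$-valued map, is holomorphic with respect to each component of $\xi$ (differentiation under the integral sign is justified because $\Omega$ is bounded); Hartogs' theorem then upgrades this to joint holomorphy in $\xi\in\C^d$. Applying the bounded linear operator $\Acal$ preserves holomorphy, and evaluating the continuous bilinear pairing in the above displayed formula yields a jointly holomorphic extension of $u_\infty$ to all of $\C^d\times\C^d$. Composing with the jointly holomorphic chart extensions $\widetilde{\phi_\alpha^{-1}}$ and $\widetilde{\phi_\beta^{-1}}$ then produces the desired jointly holomorphic extension $\tilde F:V\times V\to\C$ of $F$; restricting $\tilde F$ to the real points $B_2(0)\times B_2(0)$ yields a real analytic function that coincides with $F$ by construction. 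Varying $\alpha,\beta$ covers all of $S^{d-1}\times S^{d-1}$ and completes the proof.

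The main obstacle is giving a meaning to the total field $u(\,\cdot\,;\theta)$ for complex $\theta$: the plane wave $\rme^{\rmi k(\cdot)\cdot\theta}$ grows exponentially in some real direction, so the original scattering problem \eqref{eq:forward}--\eqref{eq:Sommerfeld} cannot be posed in the usual way. The Lippmann--Schwinger reformulation circumvents this because it involves only the compactly supported source on the bounded set $\Omega$ and a fixed bounded operator $\Acal$, through which complex analyticity of the incident field transfers to the total field via standard Banach-space-valued holomorphic function theory. The remaining ingredients--Hartogs' theorem, holomorphy under the integral sign, and composition of holomorphic maps--are routine.
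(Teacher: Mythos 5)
Your proposal is correct and follows essentially the same route as the paper: both reduce $u_\infty(\hat x;\theta)$ to a continuous bilinear $L^2$-pairing of an $\hat x$-dependent and a $\theta$-dependent family of exponentials through a fixed bounded solution operator (you via the Lippmann--Schwinger resolvent $(I-\mathcal{K})^{-1}$, the paper via the factorization $F=c_dA^*TA$ of the far field operator, which yields the identical integral representation), then complexify through the chart extensions and invoke Hartogs' theorem. The only real difference is cosmetic: you apply a Banach-space-valued Hartogs theorem at intermediate stages (to the $L^2(\Omega)$-valued exponential map and to the chart extensions), whereas the paper keeps all ingredients merely separately holomorphic and applies the scalar Hartogs theorem once, to the final $2(d-1)$-variable scalar function, which avoids having to justify the vector-valued version.
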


In particular, Theorem~\ref{thm:main} implies that any uniqueness result for an inverse scattering problem related to \eqref{eq:forward}--\eqref{eq:Sommerfeld} requiring the knowledge of $u_\infty$ everywhere on $S^{d-1}\times S^{d-1}$ remains valid if the values of $u_\infty$ and of all of its derivatives are available at a single point $(\xhat, \theta) \in S^{d-1}\times S^{d-1}$.

\begin{remark}
It follows from a result by F.~E.~Browder~\cite[Thm.~2]{Bro61} that the infinite differentiability of $u_\infty$ on $S^{d-1}\times S^{d-1}$, which is more straightforward to establish than the joint real analyticity, guarantees that $u_\infty$ is real analytic on some open dense subset of $S^{d-1}\times S^{d-1}$. However, this does not ensure that $u_\infty$ is jointly real analytic, say, along the subset $\{(-\theta,\theta) \;|\; \theta \in S^{d-1}\}$ of $S^{d-1}\times S^{d-1}$, which corresponds to the important case of backscattering.

One may also argue that the application of \cite[Thm.~1]{Bro61} would make the proof of the joint analyticity in the following section even shorter. However, we feel that a proof based on Hartogs' theorem and a standard factorization of the far field operator is a more fundamental approach. \hfill $\diamond$
\end{remark}

\section{Proof of the main result}
\label{sec:proof}
One of the main ingredients for the proof of Theorem~\ref{thm:main} is a suitable factorization of the so-called \emph{far field operator} $F: L^2(S^{d-1}) \to L^2(S^{d-1})$,
\begin{equation}
  \label{eq:DefF}
  (F g)(\xhat) := \int_{S^{d-1}} u_\infty(\xhat; \theta) g(\theta)  \ds(\theta) \,,
\end{equation}
where $\ds$ denotes the usual surface measure on $S^{d-1}$. 
Since the kernel of this integral operator is bounded it follows immediately that $F$ is compact.
Furthermore, $F$ can be decomposed as a product of three simpler operators as outlined in the following lemma (see, e.g., \cite{Kirsch02} or \cite[Sec.~4.3]{Kirsch08} for related factorizations).

Before stating this result, we introduce the three operators appearing in the factorization.
To this end let $B\subset \R^d$ be a bounded domain such that ${\supp(n-1) \subset B}$.
Then the \emph{Herglotz operator} $A:L^2(S^{d-1}) \to L^2(B)$ is defined by
\begin{equation}
  \label{eq:DefA}
  (A\psi)(y) := \int_{S^{d-1}} \psi(\theta) \rme^{\rmi k y\cdot\theta} \ds(\theta) \,, \qquad y\in B \,,
\end{equation}
and the corresponding adjoint operator $A^*: L^2(B) \to L^2(S^{d-1})$ 
is given by
\begin{equation}
  \label{eq:DefAstar}
  (A^*\phi)(\xhat) = \int_B \phi(y) \rme^{- \rmi k \xhat\cdot y} \dy \,, \qquad \xhat \in S^{d-1} \,.
\end{equation}
Moreover, given  $f \in L^2(B)$ we consider the source problem
\begin{equation}
\label{eq:Defv}
\Delta v + k^2 n v = -k^2 (n-1) f \qquad \text{in $\R^d$} \,.
\end{equation}
Here and in what follows, we interpret $L^2(B)$ as a subspace of $L^2(\R^d)$ by identifying the elements of $L^2(B)$ with their zero continuations. As in \cite[Sec.~6.2]{Kir11}, it can be seen that \eqref{eq:Defv} together with the Sommerfeld radiation condition \eqref{eq:Sommerfeld} has a unique weak solution $v \in H^1_\loc(\R^d)$. Moreover, it easily follows that the linear operator $T: L^2(B) \to L^2(B)$, 
\begin{equation}
\label{eq:T}
Tf := k^2 (n-1) (f+v|_B) \,,
\end{equation}
is bounded and the far field pattern ${v_\infty \in L^2(S^{d-1})}$ of $v$ depends continuously on $f \in L^2(B)$.

\begin{lemma}
  \label{lmm:factorization}
  The far field operator $F$ can be factored as
  \begin{equation*}
    F = c_d A^* T A
  \end{equation*}
  with $A$, $A^*$, and $T$ as in \eqref{eq:DefA}, \eqref{eq:DefAstar}, and \eqref{eq:T}, respectively.
\end{lemma}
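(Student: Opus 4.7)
The plan is to compute $(Fg)(\xhat)$ for an arbitrary $g \in L^2(S^{d-1})$ by substituting the representation formula \eqref{eq:farfieldpattern} for $u_\infty$ and then recognizing the resulting integrals as $c_d A^* T A g$. First, I would insert \eqref{eq:farfieldpattern} into \eqref{eq:DefF} and apply Fubini's theorem (justified because $n-1$ is essentially bounded and compactly supported in $B$, and $u(\,\cdot\,;\theta)$ depends continuously on $\theta \in S^{d-1}$ in, say, $L^2(B)$) to obtain
\begin{equation*}
  (Fg)(\xhat) \,=\, c_d \int_B \rme^{-\rmi k\xhat\cdot y}\, k^2(n(y)-1)\, w(y) \dy \,, \qquad w(y) \,:=\, \int_{S^{d-1}} u(y;\theta)\, g(\theta) \ds(\theta) \,.
\end{equation*}

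Next, I would decompose $w = w^i + v$ where $w^i(y) := \int_{S^{d-1}} g(\theta) \rme^{\rmi k y\cdot\theta}\ds(\theta)$ is precisely the Herglotz wave function $(Ag)(y)$, and $v(y) := \int_{S^{d-1}} g(\theta) u^s(y;\theta)\ds(\theta)$ is the corresponding superposition of scattered fields. The key step is to identify $v$ as the unique solution of the source problem \eqref{eq:Defv} with $f = Ag$: by linearity of \eqref{eq:forward}--\eqref{eq:Sommerfeld} in the incident field, $u^s(\,\cdot\,;\theta)$ satisfies $\Delta u^s(\,\cdot\,;\theta) + k^2 n u^s(\,\cdot\,;\theta) = -k^2(n-1)\rme^{\rmi k \cdot\theta}$ together with the radiation condition, and the continuous dependence in $H^1_\loc(\R^d)$ lets me integrate this identity against $g(\theta)\ds(\theta)$ and interchange with $\Delta$ and with the radiation condition, giving $\Delta v + k^2 n v = -k^2 (n-1)(Ag)$ and the Sommerfeld condition for $v$. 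Uniqueness of the weak solution then pins $v$ down as the unique $H^1_\loc$ solution associated to the source $Ag$.

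With this identification in hand, the definition \eqref{eq:T} yields $T(Ag) = k^2(n-1)(Ag + v|_B) = k^2(n-1)\, w|_B$, so the displayed formula for $(Fg)(\xhat)$ rewrites as
\begin{equation*}
  (Fg)(\xhat) \,=\, c_d \int_B \rme^{-\rmi k\xhat\cdot y}\, (T A g)(y) \dy \,=\, c_d (A^* T A g)(\xhat) \,,
\end{equation*}
by \eqref{eq:DefAstar}, which is the asserted factorization.

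I expect the main obstacle to be the rigorous justification of the interchange of the surface integral with the differential operator and the radiation condition needed to identify $v$ with the unique solution of the source problem for $f = Ag$; this requires either a weak formulation (testing against a fixed $\cont_c^\infty$ function and using Fubini on the resulting pairing) or the continuous dependence of $u^s(\,\cdot\,;\theta)$ on $\theta$ in $H^1_\loc(\R^d)$ combined with uniqueness for the source problem. Everything else — Fubini on the original integral, recognizing $Ag$ as the Herglotz wave function, and recognizing the outer integral as $c_d A^*$ — is bookkeeping.
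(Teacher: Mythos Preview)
Your argument is correct. It differs from the paper's proof mainly in how the adjoint $A^*$ enters. You start from the explicit representation \eqref{eq:farfieldpattern}, apply Fubini, and then read off the outer integral as $c_d A^*$ acting on $T(Ag)$; the identification $T(Ag)=k^2(n-1)w|_B$ comes from the same superposition argument the paper uses. The paper instead introduces an auxiliary operator $W:f\mapsto v_\infty$, first shows $F=WA$ by superposition, and then proves $W=c_d A^*T$ by noting that $A^*\phi=c_d^{-1}w_\infty$ for the radiating solution of the \emph{free} Helmholtz equation $\Delta w+k^2w=-\phi$ and invoking uniqueness to conclude $w=v$ when $\phi=Tf$. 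Your route is more computational and stays closer to \eqref{eq:farfieldpattern}; the paper's route is slightly more structural and makes the identity $A^*T=c_d^{-1}W$ hold as an operator equation on all of $L^2(B)$, not just on the range of $A$. Either way the only nontrivial step is exactly the one you flag: justifying that the superposed scattered field $v$ solves the source problem \eqref{eq:Defv} with $f=Ag$, which both proofs handle via linearity and uniqueness.
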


\begin{proof}
To begin with, we define a bounded operator $W: L^2(B) \to L^2(S^{d-1})$,
\begin{equation*}
 Wf := v_\infty \,,
\end{equation*}
where $v_\infty$ is the far field pattern of the solution to \eqref{eq:Defv} with \eqref{eq:Sommerfeld}.
Observe that any solution $u = u^i + u^s$ of \eqref{eq:forward} satisfies
  \begin{equation*}
    \Delta u^s(\,\cdot\,;\theta) + k^2 n u^s(\,\cdot\,;\theta) = -k^2 (n-1) u^i(\,\cdot\,;\theta) \qquad \text{in $\R^d$},
  \end{equation*}
and thus it follows via superposition from the definition of $F$ in \eqref{eq:DefF} and of $A$ in \eqref{eq:DefA} that $F = WA$.

 For any $\phi \in L^2(B) $, it holds that $A^*\phi = c_d^{-1} w_\infty$, where $w \in H^1_\loc(\R^d)$ is the unique weak solution of
  \begin{equation*}
    \Delta w + k^2 w = -\phi \qquad \text{in $\R^d$}
  \end{equation*}
  coupled with \eqref{eq:Sommerfeld}.
 In particular, if $\phi = Tf \in L^2(B)$ for some $f \in L^2(B)$, then
  \begin{equation*}
    \Delta w + k^2 w = -Tf =  -k^2(n-1)(f+v|_B) = \Delta v + k^2 v \qquad \text{in $\R^d$} \,,
  \end{equation*}
where $v$ is still the solution of \eqref{eq:Defv} with \eqref{eq:Sommerfeld}. 
Consequently, $v=w$ in $\R^d$, and we conclude that $A^*Tf = c_d^{-1} v_\infty = c_d^{-1} Wf$ for any $f \in L^2(B)$. Altogether we have established that $F = WA = c_d A^*TA$, which completes the proof.
\end{proof}

Since $A$ is an integral operator with a smooth kernel, it can be extended to a bounded operator from $H^{-s}(S^{d-1})$ to $L^2(B)$ for any $s \geq 0$.
Similarly, $A^*$ is bounded from $L^2(B)$ to $H^s(S^{d-1})$, $s \geq 0$.
Due to the denseness of $L^2(S^{d-1})$ in $H^{-s}(S^{d-1})$ for any $s \geq 0$
(cf., e.g., \cite{Lions72}), $A^*$ remains the adjoint of the extended $A$ in the sense that
$$
\int_B (A \psi)  \overline{\phi} \dy = \big \langle \psi, \overline{A^* \phi} \big\rangle_{S^{d-1}},\qquad \psi \in H^{-s}(S^{d-1}), \ \phi \in L^2(B),
$$
where $\langle \cdot,\cdot \rangle_{S^{d-1}}$ is the (bilinear) dual evaluation between $H^{-s}(S^{d-1})$ and $H^{s}(S^{d-1})$.
In particular, $F$ extends to an operator from $\mathscr{D}'(S^{d-1})$ to $\mathscr{D}(S^{d-1})$ that is bounded from $H^{-s}(S^{d-1})$ to $H^{s}(S^{d-1})$ for any $s \in \R$. 

\begin{corollary}
  \label{corollary:ff:pattern:bilinear:form}
  The far field pattern $u_\infty$ can be represented as
  \begin{equation*}
    u_\infty(\hat{x};\theta) =  
    c_d \int_B \rme^{-{\rm i} k \hat{x} \cdot y} \bigl(T( \rme^{{\rm i}k \theta \cdot \cdot})\bigr)(y) \dy
  \end{equation*}
  for all $(\hat{x};\theta) \in S^{d-1}\times S^{d-1}$.
\end{corollary}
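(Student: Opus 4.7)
The plan is to extend the factorization $F = c_d A^* T A$ of Lemma~\ref{lmm:factorization} from $L^2(S^{d-1})$ to distributions, and then apply both sides to the Dirac delta $\delta_\theta$ concentrated at $\theta \in S^{d-1}$. Since $\delta_\theta \in H^{-s}(S^{d-1})$ for $s > (d-1)/2$, and $A$ extends to a bounded operator $H^{-s}(S^{d-1}) \to L^2(B)$ that acts by dual pairing against the smooth kernel $(y,\theta') \mapsto \rme^{\rmi k y \cdot \theta'}$, I immediately obtain $(A\delta_\theta)(y) = \rme^{\rmi k y \cdot \theta}$. Boundedness of $T$ then gives $TA\delta_\theta = T(\rme^{\rmi k \theta \cdot \cdot}) \in L^2(B)$, on which $A^*$ can be evaluated via its original integral formula \eqref{eq:DefAstar}, yielding
\begin{equation*}
  (c_d A^* T A \delta_\theta)(\hat{x}) \;=\; c_d \int_B \rme^{-\rmi k \hat{x} \cdot y} \bigl(T(\rme^{\rmi k \theta \cdot \cdot})\bigr)(y) \dy,
\end{equation*}
which is exactly the right-hand side of the claimed identity.

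It then remains to verify that $(F\delta_\theta)(\hat{x}) = u_\infty(\hat{x};\theta)$. Since the extended $F$ maps $\mathscr{D}'(S^{d-1})$ into $\mathscr{D}(S^{d-1})$, the quantity $(F\delta_\theta)(\hat{x})$ is unambiguously defined. I would obtain the desired identification by approximation: pick a sequence $g_n \in L^2(S^{d-1})$ with $g_n \to \delta_\theta$ in $H^{-s}(S^{d-1})$; continuity of $F$ on the Sobolev scales gives $Fg_n \to F\delta_\theta$ in $H^s(S^{d-1})$, and for $s$ sufficiently large this convergence is uniform on $S^{d-1}$ by Sobolev embedding. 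On the other hand, for each $g_n \in L^2(S^{d-1})$ the definition \eqref{eq:DefF} gives $(Fg_n)(\hat{x}) = \int_{S^{d-1}} u_\infty(\hat{x};\theta') g_n(\theta') \ds(\theta')$, and since $u_\infty(\hat{x};\,\cdot\,)$ is real analytic (hence smooth) in its second variable, this dual pairing converges to $u_\infty(\hat{x};\theta)$.

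The main point requiring care is precisely this last identification, since the corollary will feed into the proof of Theorem~\ref{thm:main}; one must rely only on the already-known separate real analyticity of $u_\infty$ together with the Sobolev mapping properties of $A$, $A^*$, and $T$ established in the paragraph preceding the statement, so as to avoid circular reasoning with respect to the joint analyticity result being pursued. Once this identification is in hand, the corollary is simply a matter of reading off how each factor of $c_d A^* T A$ acts on a Dirac mass, as summarized above.
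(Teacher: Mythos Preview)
Your proposal is correct and follows essentially the same route as the paper: extend the factorization $F=c_d A^*TA$ to distributions, apply it to $\delta_\theta$, and read off $A\delta_\theta=\rme^{\rmi k\,\cdot\,\theta}$. The only cosmetic difference is that the paper identifies the pointwise value at $\hat{x}$ by pairing $F\delta_\theta$ against a second Dirac $\delta_{\hat{x}}$ through the adjoint relation for $A$ and $A^*$, whereas you reach the same conclusion via an approximation $g_n\to\delta_\theta$ and Sobolev embedding; both are valid and rest on the same ingredients already set up before the corollary.
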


\begin{proof}
  Denoting by
  $\delta_{\hat{z}} \in H^{-(d-1)/2-\varepsilon}(S^{d-1})$, $\varepsilon > 0$,  
  the delta distribution with singularity at $\hat{z}$ on $S^{d-1}$, we may write
  \begin{equation*}
    u_\infty(\hat{x};\theta) 
    = \overline{ \langle \delta_{\hat{x}}, \overline{F \delta_{\theta}} \rangle}_{S^{d-1}}
    = c_d \overline{\int_B (A \delta_{\hat{x}}) (\overline{TA\delta_{\theta}}) \dy}
    = c_d \int_{B} \rme^{-{\rm i} k \hat{x} \cdot y} \bigl(T( \rme^{{\rm i}k \theta \cdot \cdot})\bigr)(y) \dy \,,
  \end{equation*}
and the proof is complete.
\end{proof}

Now, let $(U,\phi)$ be a  chart in the analytic atlas of $S^{d-1}$ as described in Example~\ref{example-charts-3d}.
We define $g : \phi(U) \times \R^d \to \C$ by
\begin{equation*}
  g(x,y) = \rme^{-{\rm i} k \phi^{-1}(x) \cdot y} \,.
\end{equation*} 
Using the holomorphic extension $\widetilde{\phi^{-1}} : V \rightarrow \C^d$ of $\phi^{-1}$ from Example~\ref{example-charts-3d}, we immediately obtain the corresponding extension $\tilde g : V \times \R^d \rightarrow \C$,
\begin{equation*}
  \tilde g(z,y) = \rme^{-{\rm i} k \widetilde{\phi^{-1}}(z) \cdot y} \,
\end{equation*} 
of $g$. 
Obviously, $\tilde{g}(z,y)$ is holomorphic with respect to $z \in V$ for any $y \in \R^d$.

\begin{lemma}
  \label{lemma:G:holomorphic:each}
  Let $G : V \rightarrow L^2(B)$ be given by $G(z) = \tilde{g}(z,\cdot)$.
  Then $G$ is holomorphic separately in each variable $z_j$, $j=1,\ldots,d-1$, when the other variables assume arbitrary fixed values.
\end{lemma}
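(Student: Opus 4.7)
The plan is to verify directly, for each coordinate $z_j$, that the difference quotient $h^{-1}(G(z + h e_j) - G(z))$ converges in $L^2(B)$ as $h \to 0$, with the other coordinates held fixed. This is precisely strong holomorphy of $G$ in $z_j$.

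The essential preliminary observation is that $\tilde g$ is uniformly bounded on $V \times B$. Since $\widetilde{\phi^{-1}}$ is bounded on $V$ by the remark in Example~\ref{example-charts-3d}, and $y \in B$ is real and stays in a bounded set, the quantity $\mathrm{Im}(-\rmi k \widetilde{\phi^{-1}}(z) \cdot y) = -k\,\mathrm{Im}(\widetilde{\phi^{-1}}(z)) \cdot y$ is uniformly bounded, so
\begin{equation*}
  |\tilde g(z,y)| \,=\, \rme^{-k\,\mathrm{Im}(\widetilde{\phi^{-1}}(z))\cdot y} \,\leq\, C \qquad \text{on } V \times B
\end{equation*}
for some constant $C>0$.

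Now fix $z^* \in V$ and $j$. Restrict to the complex slice obtained by varying only the $j$-th coordinate; since $V$ is open, a disk of some radius $2r$ around $z^*_j$ lies inside this slice in $V$. For every $y \in B$ the function $\zeta \mapsto \tilde g(z^* + (\zeta - z^*_j) e_j, y)$ is holomorphic by separate holomorphy of $\widetilde{\phi^{-1}}$ (and the chain rule), so Cauchy's integral formula yields, for $\zeta, \zeta+h$ in the disk of radius $r$ around $z^*_j$,
\begin{equation*}
  \frac{\tilde g(\ldots,\zeta+h,\ldots,y) - \tilde g(\ldots,\zeta,\ldots,y)}{h} - \partial_{z_j}\tilde g(\ldots,\zeta,\ldots,y) \,=\, \frac{h}{2\pi\rmi} \oint_{|\zeta' - z^*_j| = 2r} \frac{\tilde g(\ldots,\zeta',\ldots,y)}{(\zeta'-\zeta-h)(\zeta'-\zeta)^2} \d\zeta' \,,
\end{equation*}
whence the bound $|(\cdots)| \leq C'|h|$, \emph{uniformly in} $y \in B$, follows from the uniform bound on $|\tilde g|$.

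Since $B$ has finite Lebesgue measure, a uniform-in-$y$ bound immediately upgrades to convergence in $L^2(B)$:
\begin{equation*}
  \Bigl\| \frac{G(z + h e_j) - G(z)}{h} - \partial_{z_j}\tilde g(z,\,\cdot\,) \Bigr\|_{L^2(B)} \,=\, O(|h|) \,,
\end{equation*}
which gives the claim. The only potential difficulty is establishing the uniform bound on $|\tilde g|$, but once the structural observation about the imaginary part is made, nothing in the argument is delicate; the rest is a direct application of Cauchy's integral formula on one-dimensional complex slices.
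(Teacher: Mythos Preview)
Your argument is correct and follows essentially the same line as the paper's proof: obtain a uniform-in-$y$ bound of order $|h|$ on the pointwise difference quotient error, then use $|B|<\infty$ to pass to $L^2(B)$. The only difference is cosmetic---the paper asserts a uniform bound on $\partial_{z_j}^2\tilde g$ and applies a Taylor-type estimate along the segment from $z$ to $z+\eta$, whereas you bound $\tilde g$ itself and invoke Cauchy's integral formula; these are equivalent devices. (One minor slip: $|\tilde g(z,y)| = \rme^{\Re(-\rmi k\,\widetilde{\phi^{-1}}(z)\cdot y)} = \rme^{+k\,\Im(\widetilde{\phi^{-1}}(z))\cdot y}$, so your displayed exponent has the wrong sign and you wrote $\Im$ where $\Re$ was meant, but the conclusion that this is uniformly bounded on $V\times B$ is unaffected.)
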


\begin{proof}
  Pick an arbitrary $1\leq j \leq d-1$ and let $M > 0$ be such that
  \begin{equation*}
    \Bigl| \frac{\partial^2}{\partial z_j^{2}} \tilde g(z,y) \Bigr| \leq M
  \end{equation*}
  for all $z = (z_1,\ldots,z_{d-1})$ in the bounded set $V \subset \C^{d-1}$ and $y \in B$.

  Fix $z \in V$ and let $r > 0$ be such that $\{ w \in \C^{d-1} \, : \, | w - z | < r\}\subset V$. 
  Denoting for any ${\eta = (0,\ldots,0,\eta_j,0,\ldots,0) \in \C^{d-1}}$ by $\Gamma \subset \C^{d-1}$ the straight line between $z$ and $z+\eta$, we find that
  \begin{equation*}
      \Bigl|\frac{\tilde g(z + \eta,y) - \tilde g(z,y)}{\eta_j} - \frac{\partial \tilde g}{\partial z_j}(z,y) \Bigr|
      \leq \Bigl| \frac1{\eta_j} \int_\Gamma \Bigl( \frac{\partial \tilde g}{\partial z_j}(x,y) - \frac{\partial \tilde g}{\partial z_j}(z,y)\Bigr) \dx \Bigr|
      \leq \frac{1}{2}M |\eta_j|
  \end{equation*}
  for all $y \in B$ and $0 \not = \eta_j \in \C$ such that $| \eta_j | < r$. 
  Thus,
  \begin{align*}
    \Bigl\| \frac{ G(z + \eta) - G(z)}{\eta_j} - \frac{\partial G}{\partial z_j}(z) \Bigr\|_{L^2(B)}
    \leq \frac12 M |\eta_j| \sqrt{|B|} \,,
  \end{align*}
  which means that $G$ is holomorphic in $z_j$. 
\end{proof}

Finally, we provide a proof for our main result.

\begin{proof}[Proof of Theorem~\ref{thm:main}]
  Suppose $\hat x, \theta \in S^{d-1}$ and let $(U_{\hat{x}},\phi_{\hat{x}})$, $(U_{\theta},\phi_\theta)$ be charts as in Example~\ref{example-charts-3d} such that $\hat{x}\in U_{\hat{x}}$ and $\theta \in U_{\theta}$.
  Considering the holomorphic extensions $\widetilde{\phi^{-1}_{\hat{x}}}, \widetilde{\phi^{-1}_\theta} : V \rightarrow \C^d$, we define $G_{\hat{x}}, G_\theta' : V  \rightarrow L^2(B)$ by
  \begin{equation*}
    [G_{\hat{x}}(z)](y) = \rme^{-{\rm i} k \widetilde{\phi_{\hat{x}}^{-1}}(z) \cdot y}, \qquad [G_\theta'(w)](y) = ([G_\theta(w)](y))^{-1} = \rme^{{\rm i} k \widetilde{\phi_\theta^{-1}}(w) \cdot y}.
  \end{equation*}
  Due to Lemma~\ref{lemma:G:holomorphic:each}, $G_{\hat{x}}$ and $G'_\theta$ are holomorphic separately in each variable when the other variables assume arbitrary fixed values. 

  As $T: L^2(B) \to L^2(B)$ is continuous, the bilinear form 
  \begin{equation*}
    (p,q) := \int_B p \; Tq \dy
  \end{equation*}
  is bounded from $L^2(B) \times L^2(B)$ to $\C$, and it easily follows that the (local) extension
  \begin{equation*}
    u_\infty \bigl(\widetilde{\phi_{\hat{x}}^{-1}}(z), \widetilde{\phi_\theta^{-1}}(w)\bigr) 
    = c_d \int_{B}  \bigl(G_{\hat{x}}(z)\bigr)(y) \bigl(T (G_\theta'(w))\bigr)(y) \dy 
  \end{equation*} 
  of the far field pattern is holomorphic in $V\times V$ with respect to each of the coordinates separately. 
  By Hartogs' theorem (see, e.g., \cite[Thm.~1.2.5]{Kra92} or \cite[Thm.~2.2.8]{Hormander73}), it is therefore a (jointly) holomorphic function in $V\times V$, and thus it is also (jointly) real analytic on $(V\times V) \cap (\R^{d-1} \times \R^{d-1})$ (cf.~\cite[Cor.~2.3.7]{Kra92}).

  Consequently, the local representation 
  \begin{equation*}
    (z,w) \mapsto u_\infty(\phi_{\hat{x}}^{-1}(z), \phi_\theta^{-1}(w)) 
  \end{equation*} 
  of the far field pattern is a (jointly) real analytic function on $B_2(0)\times B_2(0)\subset \R^{d-1}\times \R^{d-1}$. 
  Since $\hat{x}, \theta \in S^{d-1}$ were arbitrary, this shows that $u_\infty$ is (jointly) real analytic on ${S^{d-1}\times S^{d-1}}$. 
\end{proof}

\section*{Acknowledgements}
Part of this research was carried out during a visit of RG and NH at the Department of Mathematical Sciences at the University of Delaware.
RG and NH would like to thank Prof.~F.~Cakoni, Prof.~D.~Colton and Prof.~P.~Monk for the kind invitation and their hospitality.

\end{document}